\DeclareMathOperator{\coker}{coker}
\DeclareMathOperator{\Ker}{Ker}
\DeclareMathOperator{\Hom}{Hom}
\begin{document}

\title{A Random Bockstein Operator}
\author{Matthew Zabka}
\address{Department of Mathematics, Southwest Minnesota State University, Marshall, MN 56258}

\date{\today}

\newtheorem{thm}{Theorem}[section]
\newtheorem{definition}[thm]{Definition}
\newtheorem{lem}[thm]{Lemma}
\newtheorem{ex}[thm]{Example}
\newtheorem{prop}[thm]{Proposition}
\newtheorem{rmk}[thm]{Remark}

\begin{abstract}
As more of topology's tools become popular in analyzing high-dimensional data sets, the goal of understanding the underlying probabilistic properties of these tools becomes even more important. While much attention has been given to understanding the probabilistic properties of methods that use homological groups in topological data analysis, the probabilistic properties of methods that employ cohomology operations remain unstudied. In this paper, we investigate the Bockstein operator with randomness in a strictly algebraic setting. 
\end{abstract}

\maketitle

\section{Introduction}
Using the tools of algebraic topology to better understand a data set is a relatively new idea with many applications. For example, Carlsson's survey \cite{carlsson2009topology} reviews the generalization of cluster analysis to persistent homology, a technique that provides more information on the shape of a data set than traditional cluster analysis. Other authors, such as Kahle in ~\cite{kahle2011random}, have investigated the topology of a random simplicial complex. 
 
Both of these approaches have only considered Betti numbers, i.e., the ranks of cohomology groups. A natural question that arises is whether one can gather any additional information from a data set by looking at operations on the topological structure generated by that data set.  That is, how can we expand the idea of randomness to cohomolgy operations? The Bockstein homomorphism is a well-known example of a cohomology operator, and in this paper, we shall attempt to investigate this cohomology operator with randomness. 

Cohomology operators are a topological invariants that can reveal additional structure not seen in cohomological groups. For example, the cohomology groups of for $S^1 \vee S^2$ and $\mathbb{RP}^2$ are the same, but they are not homotopy equivalent spaces. To see this, one can compute the Bockstein homomorphism of both $S^1 \vee S^2$, which is trivial, and of $\mathbb{RP}^2$, which is non-trivial.

In general, the Bockstein homomorphism is a connecting homomorphism of cohomology groups defined on a chain complex. Ideally, we should consider the case of a chain complex of a randomly generated topological space. Unfortunately, this problem is very difficult. The length of the chain complex, each Abelian group in the complex, and each boundary map would all add complexity to this model.  We shall therefore examine in this paper a simpler algebraic version of the above problem whose only degrees of freedom are determined by a single boundary map. 

Let $V$ and $W$ be free-modules with coefficients in $\mathbb{Z}/p^2$. We have then have the following short exact sequences
\[
0\to pV \hookrightarrow V \twoheadrightarrow \overline{V} \to 0 
\quad 
\mathrm{and} 
\quad 
0\to pW \hookrightarrow W \twoheadrightarrow \overline{W} \to 0, 
\]
where $\overline{V}$ and $\overline{W}$ are the reductions of $V$ and $W \mod p$.   Given a map $\phi:V\to W$, which is the boundary map we describe in the paragraph above, define $\psi$ from $\overline{V}$ to $\overline{W}$ to be the map induced by $\phi$. The Bockstein homomorphism induced by $\phi$ is then a map from $\ker\psi$ to $\coker\psi$. We give construction of the Bockstein homomorphism for this case in more detail in Section \ref{BocksteinSection}.  

Since Bockstein homomorphisms are elements of $\hom(\ker\psi, \coker\psi)$, it makes sense only to compare Bocksteins induced by functions from $V$ to $W$ that are equal modulo $p$. If $V$ has dimension $n$ and $W$ has dimension $m$, then a choice of random function from $V$ to $W$ is the same as choosing a random $m$ by $n$ matrix.  To this end, let $\phi$ be a random matrix whose entries are chosen i.i.d. randomly from the discrete uniform distribution on $\{0,1,2,\ldots, p^2-1\}$. Let $\psi$ be the reduction of $\phi$ modulo $p$.  Let $\beta_\phi$ the be Bockstein homomorphism induced by $\phi$.   Let $\gamma$ be in $\hom(\ker\psi, \coker\psi)$.  We shall show that

\[
\mathbb{P}\big(\beta_\phi=\gamma \big| \overline{\phi}=\psi\big) = \frac{1}{p^{k(m-n+k)}}.
\] 

In other words, we shall show that, given $\overline{\phi}=\psi$, the Bockstein homomorphisms are distributed uniformly.

\section{Linear Algebra over $\mathbb{Z}/p^2$}\label{Zp2LA}

Many of our calculations will be done over $\mathbb{Z}/p^2$-modules. This section reviews the theory of $\mathbb{Z}/p^2$-modules over $\mathbb{Z}/p^2$.  Some of the techniques used in this section work for modules over rings other than $\mathbb{Z}/p^2$, but we shall not explore these ideas here.

Let $R$ be a ring. Given an $R$-module $M$, we say that a subset $E$ of $M$ is a basis for $M$ whenever $E$ generates $M$ and $E$ is linearly independent. This definition is equivalent to the condition that every $x$ in $M$ can be written as a unique linear combination of elements of $E$ with scalars in $R$.  A module that has a basis is called a free module.

Let $p$ be prime, and let $V$ and $W$ be free $\mathbb{Z}/p^2$-modules. Define
\[ 
\overline{V}:= V \bigotimes_{\mathbb{Z}/p^2} \mathbb{Z}/p 
\quad 
\mathrm{and} 
\quad \overline{W}:= W \bigotimes_{\mathbb{Z}/p^2} \mathbb{Z}/p.
\] 

So $\overline{V} = V/pV$ and $\overline{W} = W/pW$ are the reductions of $V$ and $W \mod p$.  Note that these are $\mathbb{Z}/p$ vector spaces. For an element $x\in V$, we use $\overline{x}$ to denote its reduction modulo $p$. For an element $y$ in $\overline{V}$, we use $\tilde{y}$ to denote a choice of representative in $V$ of $y$, so that $\overline{\tilde{y}}=y$. Given a $\mathbb{Z}/p^2$-linear map $\phi: V\to W$, let $\overline{\phi}$ denote the induced function from $\overline{V}$ to $\overline{W}$.

\begin{lem}\label{KerIsIm}
\normalfont Let $V$ be a free $\mathbb{Z}/p^2$-module. Let $p:V\to V$ be multiplication by $p$. Then the kernel of $p$ is equal to the image of $p$.  
\end{lem}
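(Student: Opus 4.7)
The plan is to reduce the statement, via a choice of basis, to the corresponding fact in the coefficient ring $\mathbb{Z}/p^2$ itself, namely that an element $a \in \mathbb{Z}/p^2$ satisfies $pa = 0$ if and only if $a$ lies in $p\mathbb{Z}/p^2$. Once this coordinate-wise statement is in hand, the $V$-version follows immediately from the uniqueness of basis expansion.

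More concretely, first I would fix a basis $\{e_1,\dots,e_n\}$ of $V$ (available because $V$ is assumed free) and write every $x \in V$ uniquely as $x = \sum_i a_i e_i$ with $a_i \in \mathbb{Z}/p^2$. Then $px = \sum_i (pa_i)\,e_i$, and by uniqueness of the expansion the relation $px = 0$ is equivalent to $pa_i = 0$ for every $i$. Next I would verify the scalar claim: in $\mathbb{Z}/p^2$, the element $pa$ is zero precisely when $a$ is divisible by $p$, i.e.\ $a \in p(\mathbb{Z}/p^2)$; the forward direction uses that $p^2$ divides $pa$ in $\mathbb{Z}$ iff $p \mid a$, and the reverse direction is immediate. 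Combining these shows that $\ker p$ consists exactly of those $x = \sum_i a_i e_i$ with each $a_i \in p(\mathbb{Z}/p^2)$, which is precisely $pV$.

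For the image, I would argue symmetrically: $\operatorname{im} p$ consists of all elements of the form $p y = \sum_i (p b_i) e_i$ as $y = \sum_i b_i e_i$ ranges over $V$. Since $b_i$ ranges over all of $\mathbb{Z}/p^2$, the coefficient $p b_i$ ranges over all of $p(\mathbb{Z}/p^2)$, so $\operatorname{im} p = pV$ as well. Comparing the two descriptions yields $\ker p = pV = \operatorname{im} p$.

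The only subtle point, and the one I would be careful to isolate as a lemma-within-the-proof, is the scalar identity $\{a \in \mathbb{Z}/p^2 : pa = 0\} = p(\mathbb{Z}/p^2)$; everything else is bookkeeping with the basis. There is no real obstacle here, since the finite ring $\mathbb{Z}/p^2$ is simple enough that this can be checked by a one-line divisibility argument, and freeness of $V$ lets us pass from the scalar statement to the module statement componentwise.
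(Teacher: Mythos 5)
Your proof is correct and follows essentially the same route as the paper: fix a basis of $V$, reduce the statement coordinatewise to the scalar fact that $pa=0$ in $\mathbb{Z}/p^2$ if and only if $p\mid a$, and conclude. The only cosmetic difference is that you identify both $\ker p$ and $\operatorname{im} p$ with $pV$ via the basis, whereas the paper handles the inclusion $\operatorname{im} p\subseteq\ker p$ directly from $p\cdot pz=p^2z=0$ without invoking the basis; this changes nothing of substance.
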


\begin{proof}
Let $\{e_i\}$ be a basis for $V$. Let $x$ be in $\ker p$. Since $\{e_i\}$ is a basis, there are $\alpha_i$ in $\mathbb{Z}/p^2$ such that $x=\sum_i \alpha_i e_i$. Since $x$ is in $\ker p$ we have $px=\sum_i p\alpha_i\cdot e_i = 0$. By the independence of the $e_i$, we have $p\alpha_i = 0$ for each $i$. Thus $\alpha_i = p\beta_i$ for some $\beta_i\in\mathbb{Z}/p^2$. Then $p(\sum_i\beta_i e_i)=\sum_i \alpha_i e_i = x$. So that $x$ is in the image of $p$.  

Next, assume that $y$ is in the image of $p$.  Then there exists a $z\in V$ with $pz=y$. So $py = p^2 z = 0$. So $y$ is in the kernel of $p$.
\end{proof}

We know that $pV$ and $\overline{V}$ are isomorphic as $\mathbb{Z}/p$-vector spaces, because they both have the same dimension. The following lemma gives an explicit isomorphism between these two spaces.

\begin{lem}\label{pxToBarx}
\normalfont The map $f:pV \to \overline{V}$ defined by $px \mapsto \overline{x}$ is a $\mathbb{Z}/p$-linear isomorphism.
\end{lem}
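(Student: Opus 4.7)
The plan is to check, in this order, that $f$ is well-defined, $\mathbb{Z}/p$-linear, and bijective. The only nontrivial point is well-definedness, because the formula $px \mapsto \overline{x}$ mentions the preimage $x$, which is not uniquely determined by $px$; once that is handled, the rest will be mechanical.

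First I would suppose $px = py$ for some $x, y \in V$, so that $p(x-y) = 0$, i.e.\ $x - y \in \ker p$. Here is where Lemma \ref{KerIsIm} does the work: it gives $\ker p = pV$, so $x - y = pz$ for some $z \in V$. Reducing modulo $p$ then yields $\overline{x} - \overline{y} = \overline{pz} = 0$, so $\overline{x} = \overline{y}$, and $f$ is well-defined. This is the one step that requires an outside tool, and it is where I expect the main (very mild) obstacle to lie.

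Having established well-definedness, $\mathbb{Z}/p$-linearity follows from the identity $\alpha(px) + \beta(py) = p(\alpha x + \beta y)$ together with linearity of reduction mod $p$; note that both $pV$ and $\overline{V}$ are annihilated by $p$, so they carry natural $\mathbb{Z}/p$-module structures and the $\mathbb{Z}/p^2$-scalars descend unambiguously. Surjectivity is immediate, since every $\overline{x} \in \overline{V}$ equals $f(px)$ by construction. For injectivity, if $f(px) = \overline{x} = 0$ then $x \in pV$, so $x = py$ for some $y \in V$, whence $px = p^2 y = 0$; thus $\ker f = 0$. (One could alternatively finish by a dimension count, since both $pV$ and $\overline{V}$ have $\mathbb{Z}/p$-dimension equal to the rank of $V$, but the direct argument is just as short.)
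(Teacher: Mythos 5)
Your proof is correct and follows essentially the same route as the paper: the key step in both is invoking Lemma~\ref{KerIsIm} to show $f$ is well-defined. The paper verifies bijectivity by showing the inverse map $g:\overline{x}\mapsto px$ is also well-defined, whereas you check injectivity and surjectivity of $f$ directly; these are equivalent framings of the same argument, and your treatment of $\mathbb{Z}/p$-linearity is, if anything, a bit more explicit than the paper's ``by inspection.''
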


\begin{proof}
We show that both $f$ and its inverse mapping $g$, which maps $\overline{x}$ in $\overline{V}$ to $px$ in $pV$, are well-defined. To show that $f$ is well-defined, assume that $px = py$ for some $x$ and $y$ in $V$. Then $px-py= p(x-y) = 0$. So $x-y = pz$ for some $z\in V$  by Lemma \ref{KerIsIm}. Note that
\[
\overline{x}-\overline{y} = \overline{x-y} = p\overline{z} = \overline{0},
\] 
so that $f$ is well-defined.

For the inverse mapping $g$, suppose $\overline{x} = \overline{y}$. Then $\overline{x-y} = \overline{0}$. So by Lemma \ref{KerIsIm}, $x-y = pz$ for some $z\in V$.  We have 
\[
px - py = p(x-y) = p^2 z = 0. 
\]
So $g$ is well-defined. By inspection we see that both $f$ and $g$ are $\mathbb{Z}/p$-linear functions, and so the proof is complete.
\end{proof}

The main proposition of this section shows that any lift of a basis of $\overline{V}$ is a basis of $V$. Such bases will be useful for constructing linear maps out of $V$. That is, if one defines a map on any basis of $V$, then this map extends linearly to all of $V$.

\begin{prop}\label{LiftProp}
\normalfont Let $\{e_i\}$ be a basis for $\overline{V}$. For each $e_i$, let $\tilde{e}_i$ in $V$  be any lift of $e_i$.  Then $\{\tilde{e}_i\}$ is a basis for $V$.
\end{prop}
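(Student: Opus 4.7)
The plan is to verify that $\{\tilde{e}_i\}$ spans $V$ and is linearly independent over $\mathbb{Z}/p^2$. In each case I will reduce a relation in $V$ modulo $p$, use that $\{e_i\}$ is a basis of $\overline{V}$ to extract information there, and then lift that information back to $V$ using Lemma~\ref{pxToBarx} and Lemma~\ref{KerIsIm}. The two earlier lemmas do all the real work; the rest is bookkeeping.

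For spanning, given $v\in V$, I would first expand its reduction as $\overline{v}=\sum_i \overline{\alpha}_i e_i$ with $\overline{\alpha}_i\in\mathbb{Z}/p$, choose arbitrary lifts $\alpha_i\in\mathbb{Z}/p^2$, and observe that $v-\sum_i \alpha_i\tilde{e}_i$ reduces to $0$ modulo $p$ and therefore lies in $pV$. By Lemma~\ref{pxToBarx} the assignment $p\tilde{e}_i\mapsto e_i$ extends to a $\mathbb{Z}/p$-linear isomorphism $pV\to\overline{V}$, so $\{p\tilde{e}_i\}$ is a $\mathbb{Z}/p$-basis of $pV$. Writing the residual element as $\sum_i \overline{\gamma}_i(p\tilde{e}_i)=\sum_i (p\gamma_i)\tilde{e}_i$ for any lifts $\gamma_i\in\mathbb{Z}/p^2$ (well defined because $p\cdot$ only sees the residue mod $p$) and adding yields $v=\sum_i(\alpha_i+p\gamma_i)\tilde{e}_i$, as required.

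For linear independence, I would suppose $\sum_i \alpha_i\tilde{e}_i=0$ with $\alpha_i\in\mathbb{Z}/p^2$. Reducing modulo $p$ gives $\sum_i \overline{\alpha}_i e_i=0$ in $\overline{V}$, so the independence of $\{e_i\}$ forces each $\overline{\alpha}_i=0$, i.e.\ $\alpha_i=p\beta_i$ for some $\beta_i\in\mathbb{Z}/p^2$. Substituting yields $p\bigl(\sum_i \beta_i\tilde{e}_i\bigr)=0$, so $\sum_i \beta_i\tilde{e}_i\in\ker p$; by Lemma~\ref{KerIsIm} this kernel coincides with the image of $p$, i.e.\ $pV$, so reducing once more shows $\sum_i \overline{\beta}_i e_i=0$ in $\overline{V}$. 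A second invocation of the basis property forces $\overline{\beta}_i=0$, hence $\beta_i=pc_i$ for some $c_i$ and therefore $\alpha_i=p\beta_i=p^2 c_i=0$.

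There is no serious obstacle here; the only point that requires a moment's care is that the coefficients $p\gamma_i$ appearing in the spanning step and the final conclusion $p\beta_i=0$ in the independence step both depend only on the mod-$p$ residues of $\gamma_i$ and $\beta_i$, and this is precisely what the two supporting lemmas encode. The argument thus amounts to a two-step ``reduce and lift'' bootstrap that invokes each lemma once in each direction.
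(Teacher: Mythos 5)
Your proof is correct and follows essentially the same "reduce twice, lift twice" strategy as the paper, invoking Lemma~\ref{pxToBarx} for the spanning step and Lemma~\ref{KerIsIm} for the independence step just as the paper does. The only cosmetic difference is that in the spanning step you observe directly that $\{p\tilde{e}_i\}$ is a $\mathbb{Z}/p$-basis of $pV$ and expand the residual element against it, whereas the paper reaches the same expression by one extra round of writing $y = pz + \sum_i \beta_i\tilde{e}_i$ and letting $p^2 z$ vanish.
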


\begin{proof}
We first show that the set $\{\tilde{e}_i\}$ is linearly independent.  Suppose $\alpha_i\in \mathbb{Z}/p^2$ with 
\begin{equation}\label{LiftEq}
\sum_i \alpha_i \tilde{e}_i = 0.
\end{equation}
Projecting to $\overline{V}$ we obtain $\sum_i \overline{\alpha}_i e_i = \overline{0}$. Since $\{e_i\}$ is a basis for $\overline{V}$, we must have that $\overline{\alpha_i} = \overline{0}$ for every $i$. So each $\alpha_i = p\beta_i$ for some $\beta_i$ in $\mathbb{Z}/p^2$. Thus, (\ref{LiftEq}) gives that $\sum_i \beta_i \cdot p\tilde{e}_i = 0$ in $pV$. Under the isomorphism given in Lemma \ref{pxToBarx}, we have $\sum_i \overline{\beta_i}e_i = \overline{0}$ in $\overline{V}$. Since the set $\{e_i\}$ is linearly independent, each $\overline{\beta_i} = \overline{0}$, so each $\beta_i = p\gamma_i$ for some $\gamma_i$ in $\mathbb{Z}/p^2$. This gives that each $\alpha_i = p\beta_i =  p^2\gamma_i=0$. So the set $\{\tilde{e}_i\}$ is linearly independent.

We next show that $\{\tilde{e}_i\}$ spans $V$. Let $x\in V$. Since the set $\{e_i\}$ is a basis for $\overline{V}$, there are $\alpha_i\in\mathbb{Z}/p^2$ such that $\sum_i\overline{\alpha_i} e_i = \overline{x}.$ So for some $y\in V$,

\begin{equation}
x  =  py + \sum_i \alpha_i \tilde{e}_i.
\end{equation}

Under the isomorphism given in Lemma \ref{pxToBarx}, the element $py$ in $pV$ is mapped to $\overline{y}$ in $\overline{V}$. Since the $e_i$ form a basis for $\overline{V}$, there exist $\beta_i$ in $\mathbb{Z}/p^2$ such that $\sum_i \overline{\beta_i} e_i = \overline{y}$. Thus $pz + \sum_i \beta_i \tilde{e}_i = y$ for some $z\in V$. Substituting this into (2) gives

\[
x = p\left(pz + \sum\beta_i\tilde{e}_i  \right) +\sum_i \alpha_i \tilde{e}_i.
\]
Simplifying gives $x = \sum_i (\alpha_i - p\beta_i)\tilde{e}_i$, so that $x$ is in the span of $\{\tilde{e}_i\}$, as desired.
\end{proof}

For the map $\psi$ with domain $\overline{V}$ and target $\overline{W}$, recall that $\coker\psi$ is defined as the quotient $\overline{W}/\psi(\overline{V})$.  Our next lemma shows that we may regard the Bockstein homomorphism as a map $\beta:\ker \psi \to \coker \psi$. The techniques used in the proof are similar to the techniques used in Lemma \ref{pxToBarx}.

\begin{lem}\label{CoKerIso}
\normalfont The map $f$ from $pW/\phi(pV)$ to $\coker \psi$ defined by 
\[
f: pw +\phi(pV) \to \overline{w} + \psi(\overline{V})
\]
is an isomorphism.
\end{lem}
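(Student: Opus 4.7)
The natural plan is to factor $f$ through the isomorphism of Lemma \ref{pxToBarx}. Define an auxiliary map $g : pW \to \coker\psi$ by $g(pw) = \overline{w} + \psi(\overline{V})$. This $g$ is precisely the composition of the isomorphism $pW \to \overline{W}$ from Lemma \ref{pxToBarx} with the canonical projection $\overline{W} \twoheadrightarrow \overline{W}/\psi(\overline{V}) = \coker\psi$. Hence $g$ is automatically well-defined, $\mathbb{Z}/p$-linear, and surjective, which disposes of well-definedness on the $pW$ level and of surjectivity of $f$ in one step.

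The heart of the argument is then to identify $\ker g$ with $\phi(pV)$, so that the first isomorphism theorem yields $f$ as the desired isomorphism $pW/\phi(pV) \to \coker\psi$. The containment $\phi(pV) \subseteq \ker g$ is a direct calculation: for $pv \in pV$, write $\phi(pv) = p\phi(v)$, so $g(\phi(pv)) = \overline{\phi(v)} + \psi(\overline{V}) = \psi(\overline{v}) + \psi(\overline{V}) = 0$.

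For the reverse containment $\ker g \subseteq \phi(pV)$, suppose $pw \in pW$ satisfies $\overline{w} \in \psi(\overline{V})$. Choose $v \in V$ with $\psi(\overline{v}) = \overline{w}$, i.e., $\overline{\phi(v)} = \overline{w}$, and lift to an equation $w = \phi(v) + pu$ for some $u \in W$. Multiplying by $p$ and using $p^2 = 0$ in $\mathbb{Z}/p^2$ gives $pw = p\phi(v) = \phi(pv) \in \phi(pV)$, as required.

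I expect the main subtle point to be the last step, specifically the use of $p^2 = 0$ to eliminate the $p^2 u$ term; this is the place where the argument genuinely uses that we are working over $\mathbb{Z}/p^2$ rather than a more general ring, and it mirrors the role played by Lemma \ref{KerIsIm} inside the proof of Lemma \ref{pxToBarx}. Aside from that, the proof is essentially a diagram-chase bundled into the first isomorphism theorem.
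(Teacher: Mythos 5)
Your proof is correct, and it takes a somewhat cleaner route than the paper's. The paper verifies directly that both $f$ and its putative inverse are well-defined on representatives, invoking Lemma \ref{KerIsIm} at each stage; you instead lift the whole problem to $pW$, observe that $pw \mapsto \overline{w} + \psi(\overline{V})$ is automatically well-defined, linear, and surjective because it is the composite of the isomorphism of Lemma \ref{pxToBarx} with the canonical projection $\overline{W} \twoheadrightarrow \coker\psi$, and then reduce everything to the single kernel computation $\ker g = \phi(pV)$, after which the first isomorphism theorem produces $f$ for free. The two containments you check are essentially the same calculations the paper does (reading $p\phi(v) = \phi(pv)$ and using $p^2 = 0$), but your organization avoids having to argue well-definedness of an inverse map separately, and it makes the $\mathbb{Z}/p$-linearity of $f$ automatic rather than ``by inspection.'' Your remark that the $p^2 = 0$ step is where the ring $\mathbb{Z}/p^2$ is genuinely used is apt; the paper buries that same fact inside its appeals to Lemma \ref{KerIsIm}. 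What the paper's version buys is self-containment at the elementary level of cosets and representatives; what yours buys is transparency about why the map is an isomorphism, at the modest cost of invoking the first isomorphism theorem.
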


\begin{proof}

We must show that $f$ and its inverse mapping $g$ are well-defined.

To show that $f$ is well-defined, suppose $pw+\phi(pV)=pw'+\phi(pV)$ in $pW/\phi(pV)$. We must show that $\overline{w}-\overline{w'}$ is in $\psi(\overline{V})$. We have that $p(w-w') \in \phi(pV)$.  Thus $p(w-w') = p\phi(v)$ for some $v\in V$. By Lemma \ref{KerIsIm}, we have $w-w'-\phi(v) = py$. Thus $\overline{w} - \overline{w'} = \overline{\phi(v)}=\psi(\overline{v})$. So $\overline{w} - \overline{w'}$ is in $\psi(\overline{V})$, and this shows that $f$ is well-defined.

We next want to show that the inverse mapping $g$ is well-defined.  Suppose that $\overline{w}+\psi(\overline{V}) = \overline{w'}+\psi(\overline{V})$.  We must show that $pw+\phi(pV) = pw' +\phi(pV)$.Since $\overline{w} - \overline{w'} + \overline{\phi}(\overline{V}) = \overline{0}+\overline{\phi}(\overline{V})$, there exists a $\overline{v}\in\overline{V}$ with $\overline{w}-\overline{w'}=\overline{\phi}(\overline{v})$.  Thus $w-w'-\phi(v) = px$ for some $x$, which, by Lemma \ref{KerIsIm} gives $p[w-w'-p\phi(v)] = 0$. So $pw+\phi(pV) = pw' +\phi(pV).$ By inspection, $f$ and $g$ are both linear, and the proof is complete.
\end{proof}

\section{Spaces of Linear Maps}\label{L_0Section}

We should like to further investigate the connection between a map $\psi:\overline{V} \to \overline{W}$ and the Bockstein homomorphisms induced by a map $\phi:V\to W$ such that $\overline{\phi}=\psi$. For this section, we shall treat $\psi$ as a fixed $\mathbb{Z}/p$-linear map from $\overline{V}$ to $\overline{W}$.

\begin{definition}\label{LpsiDef}
\normalfont Let $V$ and $W$ be $\mathbb{Z}/p^2$-modules. Let $\overline{V}$ and $\overline {W}$ be the reductions of $V$ and $W$ modulo $p$. Let $\psi$ be a fixed $\mathbb{Z}/p$-linear map from $\overline{V}$ to $\overline{W}$. Define $L_\psi$ to be the collection of all maps from $V$ to $W$ whose reduction modulo $p$ is $\psi$.
\end{definition}

It will also be useful in this section to choose a basis for $\overline{V}$, which, by Proposition \ref{LiftProp} will lift to a basis for $V$.

\begin{definition}\label{BasisDef}
\normalfont Let $V$, $\overline{V}$, and $\psi$ be as in Definition \ref{LpsiDef}. Let $\{e_i\}\cup\{f_j\}$ be a basis for $\overline{V}$ such that $\{e_i\}$ is a basis for the subspace $\ker\psi$ of $\overline{V}$. For each $i$, let $\tilde{e_i}$ in $V$ be a lift of $e_i$. For each $j$ let $\tilde{f_j}$ in $V$ be a lift of $f_j$. 
\end{definition} 

By Proposition \ref{LiftProp}, $\{\tilde{e_i}\}\cup\{\tilde{f_j}\}$ is a basis for $V$.  If the map $\psi:\overline{V}\to\overline{W}$ is not the zero map, then we know that $L_\psi$ is not a vector space, for in this case, $0$ is not in $L_\psi$.  This fact, along with the next lemma, gives that $L_\psi$ is a vector space if and only if $\psi$ is the zero map.

\begin{lem}\label{L_0isZp}
\normalfont
The space $L_0$ with pointwise addition and $\mathbb{Z}/p$ scalar multiplication defined by
\[
\overline{\alpha}\cdot \phi := \alpha \cdot \phi,
\]
where $\alpha$ is in $\mathbb{Z}/p^2$ and $\phi$ is in $L_0$, is a $\mathbb{Z}/p$-vector space.  In particular, if $V$ has dimension $n$ and $W$ has dimension $m$, then $L_0$ is a $\mathbb{Z}/p$-vector space of dimension $m\cdot n$.
\end{lem}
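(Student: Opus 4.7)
The plan is to verify the vector space structure in two stages and then identify $L_0$ with an explicit product that we already know the dimension of.

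First, I would check that the scalar multiplication is well defined. Suppose $\alpha, \alpha' \in \mathbb{Z}/p^2$ satisfy $\overline{\alpha} = \overline{\alpha'}$; then $\alpha - \alpha' = p\beta$ for some $\beta$. Since $\phi \in L_0$ means $\overline{\phi} = 0$, we have $\phi(v) \in pW$ for every $v \in V$, hence $(\alpha - \alpha')\phi(v) = p\beta \phi(v) \in p^2W = 0$. Thus $\alpha\phi = \alpha'\phi$, and the scalar multiplication depends only on $\overline{\alpha}$. Closure under addition is immediate because $pW$ is closed under addition, and closure under this scalar multiplication is similarly immediate. The remaining vector space axioms are inherited from the $\mathbb{Z}/p^2$-module structure on $\Hom(V,W)$ via the quotient $\mathbb{Z}/p^2 \twoheadrightarrow \mathbb{Z}/p$, so they require only routine verification.

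For the dimension, I would fix a basis $\{\tilde{e}_1,\ldots,\tilde{e}_n\}$ of $V$ (which exists by Proposition \ref{LiftProp}) and consider the evaluation map
\[
\mathrm{ev}: L_0 \longrightarrow (pW)^n, \qquad \phi \longmapsto \bigl(\phi(\tilde{e}_1),\ldots,\phi(\tilde{e}_n)\bigr).
\]
Because $V$ is free on $\{\tilde{e}_i\}$, any assignment of the basis vectors to elements of $W$ extends uniquely to a $\mathbb{Z}/p^2$-linear map, and such an extension lies in $L_0$ precisely when each $\phi(\tilde{e}_i) \in pW$. Thus $\mathrm{ev}$ is a bijection. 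It is $\mathbb{Z}/p$-linear once we give $pW$ its natural $\mathbb{Z}/p$-vector space structure (scalar multiplication being well defined on $pW$ for the same reason as above).

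Finally, by Lemma \ref{pxToBarx} we have $pW \cong \overline{W}$ as $\mathbb{Z}/p$-vector spaces, so $\dim_{\mathbb{Z}/p} pW = m$. Therefore $\dim_{\mathbb{Z}/p} L_0 = \dim_{\mathbb{Z}/p}(pW)^n = mn$. The only real subtlety is making sure the scalar action passes through the quotient $\mathbb{Z}/p^2 \to \mathbb{Z}/p$, which is exactly where the hypothesis $\overline{\phi}=0$ gets used; everything else is bookkeeping with the chosen basis.
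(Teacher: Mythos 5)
Your proof is correct, and the well-definedness argument is essentially the same computation the paper carries out. The difference is one of completeness: the paper's proof verifies only that the $\mathbb{Z}/p$ scalar action is well defined and dismisses everything else, including the dimension claim $\dim L_0 = mn$, as ``straightforward.'' You actually prove the dimension statement, identifying $L_0$ with $(pW)^n$ via evaluation on a lifted basis of $V$ and then invoking Lemma \ref{pxToBarx} to get $\dim_{\mathbb{Z}/p} pW = m$. This is a genuine improvement in rigor: the bijectivity of the evaluation map is where the freeness of $V$ (and hence Proposition \ref{LiftProp}) actually enters, and making that explicit is worthwhile since the dimension count is later used in Lemma \ref{SizeLpsi} and the final theorem. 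Your approach and the paper's are not in tension; you simply filled in the part the author left implicit, and did so along the most natural route.
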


\begin{proof}
We shall only show that this scalar multiplication is well-defined, as the other parts of the proof are straightforward.  Let $\alpha_1$ and $\alpha_2$ be in $\mathbb{Z}/p^2$ with $\overline{\alpha_1}=\overline{\alpha_2}$. Let $\phi$ be in $L_0$ and let $v\in V$.  Then $\alpha_1-\alpha_2 = p\beta$ for some $\beta$ in $\mathbb{Z}/p^2$ and $\phi(v) = pw$ for some $w$ in $W$, because $\overline{\phi} = 0$. So we have
\begin{eqnarray*}
\overline{\alpha_1}\cdot\phi(v) - \overline{\alpha_2}\cdot\phi(v) &=& \alpha_1\phi(v) - \alpha_2\phi(v) \\
   &=& (\alpha_1-\alpha_2)\phi(v) \\
   &=& (p\beta) (pw)\\
   &=& p^2 \beta w\\
   &=& 0,
\end{eqnarray*}
which shows that this scalar multiplication in $\mathbb{Z}/p^2$ is well-defined.
\end{proof}

Let $\phi_0$ be any element of $L_\psi$. Then $\phi_0 + L_0 = L_\psi$, so we may regard $L_\psi$ as a coset of $L_0$. It will be useful however to choose a particular $\phi_0\in L_\psi$ whenever we wish to regard $L_\psi$ as a coset of $L_0$. For this, we need only define $\phi_0$ on the basis $\{\tilde{e_i}\}\cup\{\tilde{f_j}\}$ given in Definition \ref{BasisDef}.

\begin{rmk}\label{Phi0Def} \normalfont
When we regard $L_\psi$ as a coset of $L_0$, we shall choose $\phi_0$ such that, for all $i$, $\phi_0(\tilde{e}_i) = 0$, and for all $j$, $\phi_0(\tilde{f_j})$ is any value whose reduction modulo $p$ is $\psi(f_j)$. 
\end{rmk}

We are now ready to count the number of elements in $L_\psi$.

\begin{lem}\label{SizeLpsi}
\normalfont
For any $\psi:\overline{V}\to\overline{W}$, the set $L_\psi$ has $p^{m n}$ elements.
\end{lem}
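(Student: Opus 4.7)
The plan is to reduce the count of $L_\psi$ to the already-computed count of $L_0$ by exhibiting a bijection between the two sets. The structural fact to exploit is the paragraph preceding the lemma: for any $\phi_0 \in L_\psi$, the translation map $\eta \mapsto \phi_0 + \eta$ is a bijection from $L_0$ onto $L_\psi$, since reduction mod $p$ is additive and $\overline{\phi_0 + \eta} = \overline{\phi_0} + \overline{\eta} = \psi + 0 = \psi$, while conversely any $\phi \in L_\psi$ satisfies $\overline{\phi - \phi_0} = 0$.

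The first step, and the only real content beyond a translation argument, is to show $L_\psi$ is nonempty so that a $\phi_0$ actually exists. Here I would invoke Remark \ref{Phi0Def}: using the basis $\{\tilde{e}_i\}\cup\{\tilde{f}_j\}$ of $V$ furnished by Definition \ref{BasisDef} and Proposition \ref{LiftProp}, define $\phi_0(\tilde{e}_i) = 0$ and choose $\phi_0(\tilde{f}_j)$ to be any lift in $W$ of $\psi(f_j) \in \overline{W}$, then extend $\mathbb{Z}/p^2$-linearly. One checks that $\overline{\phi_0} = \psi$ holds on the basis $\{e_i\}\cup\{f_j\}$ of $\overline{V}$ and hence everywhere, so $\phi_0 \in L_\psi$.

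The second step is the bijection itself. Given $\phi_0 \in L_\psi$, the map $T: L_0 \to L_\psi$ sending $\eta \mapsto \phi_0 + \eta$ is injective (by cancellation in the abelian group of all $\mathbb{Z}/p^2$-linear maps $V \to W$) and surjective (given $\phi \in L_\psi$, set $\eta = \phi - \phi_0$; then $\overline{\eta} = \psi - \psi = 0$ so $\eta \in L_0$ and $T(\eta) = \phi$). Hence $|L_\psi| = |L_0|$.

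The third step simply quotes Lemma \ref{L_0isZp}: $L_0$ is a $\mathbb{Z}/p$-vector space of dimension $mn$, so $|L_0| = p^{mn}$, and therefore $|L_\psi| = p^{mn}$. I do not anticipate a real obstacle; the only subtlety is making sure $L_\psi$ is nonempty, and that is handled entirely by Proposition \ref{LiftProp} together with the explicit construction in Remark \ref{Phi0Def}.
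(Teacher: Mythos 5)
Your proof is correct and takes essentially the same approach as the paper: regard $L_\psi$ as the translate $\phi_0 + L_0$ inside $\Hom(V,W)$ and invoke Lemma~\ref{L_0isZp} for the count of $L_0$. You are somewhat more explicit about why the translation is a bijection and why $L_\psi$ is nonempty, but the argument is the same.
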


\begin{proof}
Lemma \ref{L_0isZp} tells us that $L_0$ is a $\mathbb{Z}/p$-vector space, but by definition, $L_0$ also is a $\mathbb{Z}/p^2$-submodule of $\Hom(V,W)$.  When we regard $L_\psi$ as $\phi_0+L_\psi$, where $\phi_0$ is as defined in Remark \ref{Phi0Def}, this addition occurs in a $\mathbb{Z}/p^2$-submodule. So, while $L_\psi$ is not a translate of $L_0$ as a $\mathbb{Z}/p$-vector space, we still know that $L_\psi$ has the same number of elements as $L_0$. This information, along with Lemma \ref{L_0isZp}, completes the proof.
\end{proof}


\section{The Bockstein Homomorphism}\label{BocksteinSection}

What follows is a short review of the Bockstein homomorphism in the context that is relevant for our study of cohomology operations with randomness. Several references cover the Bockstein homomorphism and cohomology operations in more generality. See, for example, \cite{mosher2008cohomology}.

As in Section \ref{Zp2LA}, let $V$ and $W$ be $\mathbb{Z}/p^2$ free-modules with coefficients in $\mathbb{Z}/p^2$. We have the following short exact sequences:
\[
0\to pV \hookrightarrow V \twoheadrightarrow \overline{V} \to 0 
\quad 
\mathrm{and} 
\quad 
0\to pW \hookrightarrow W \twoheadrightarrow \overline{W} \to 0, 
\]
where $\overline{V}$ and $\overline{W}$ are the reductions of $V$ and $W \mod p$. 

Consider a $\mathbb{Z}/p^2$-linear map $\phi$ from $V$ to $W$. Let $\psi$ be the map from $\overline{V}$ to $\overline{W}$ induced by $\phi$. Then the Snake Lemma \cite{atiyah1994introduction} defines a map $\beta$ with domain $\ker\psi$ and target $pW/\phi(pV)$. The following diagram illustrates the Snake Lemma.
\begin{center}
\begin{tikzpicture}[>=angle 90,scale=2.2,text height=1.5ex, text depth=0.25ex]

\node (k1)  at (0,3) {} ;
\node (k2) [right=of k1] {};
\node (k3) [right=of k2] {$\Ker \psi$};
\node (a1) [below=of k1] {$pV$};
\node (a2) [below=of k2] {$V$};
\node (a3) [below=of k3] {$\overline{V}$};
\node (b1) [below=of a1] {$pW$};
\node (b2) [below=of a2] {$W$};
\node (b3) [below=of a3] {$\overline{W}$};
\node (c1) [below=of b1] {$pW/\phi(pV)$};
\draw[->,red]
(k3) edge[out=0,in=180,red] node[pos=0.55,yshift=5pt] {$\beta$} (c1);
\draw[->]
(k3) edge (a3)
(b1) edge (c1);
\draw[->]
(a1) edge node[auto] {} (a2)
(a2) edge node[auto] {} (a3)
(a1) edge node[auto] {} (b1)
(a2) edge node[auto] {$\phi$} (b2)
(a3) edge node[auto] {$\psi$} (b3)
(b1) edge node[below] {} (b2)
(b2) edge node[below] {} (b3);
\end{tikzpicture}
\end{center}

More precisely, for $\overline{v}\in\ker\psi$, choose any representative $v\in V$ of $\overline{v}$. Since the squares in the above diagram commute, we have $\overline{\phi(v)} = \psi (\overline{v}) = \overline{0}$. So $\phi(v) = pw$ for some $w\in W$. Define the Bockstein homomorphism $\beta$ from $\ker\psi$ to $pW/\phi(pV)$ by 
\[
\beta(\overline{v}): = pw + \phi(pV).
\]
The following diagram shows the process described above.

\[
\begin{tikzcd}
\			& v\arrow[mapsto]{r}\arrow[mapsto]{d}{\phi} &\overline{v}\\
pw\arrow{d}\arrow[mapsto]{r} & pw = \phi(v) \\
pw + \phi(pV)
\end{tikzcd}
\]

By construction, the target of $\beta$ is $pW/\phi(pV)$. However, by Lemma \ref{CoKerIso}, we know that $pW/\phi(pV)$ is isomorphic to $\coker \psi$. So henceforth we shall regard $\beta$ as a map into $\coker\psi$.

\begin{rmk}\normalfont
We note here that if one regards an arbitrary chain complex, the map $\beta$ is often called a connecting homomorphism. When the chain complex is generated by a topological space, the map $\beta$ is called the Bockstein homomorphism.  If we regard $\phi$ as the map between $V$ and $W$ in the following chain complex

\[
\begin{tikzcd}
\cdots \arrow{r} & 0 \arrow{r}&V\arrow{r}{\phi}& W \arrow{r} & 0 \arrow{r}&\cdots,
\end{tikzcd}
\]
and consider the reduced chain complex
\[
\begin{tikzcd}
\cdots \arrow{r} & 0 \arrow{r}&\overline{V}\arrow{r}{\psi}& \overline{W} \arrow{r} & 0 \arrow{r}&\cdots,
\end{tikzcd}
\]
then the only possible non-trivial homology groups of this chain complex are $\ker \psi$ and $\coker\psi$.  Although we are in a strictly algebraic setting, we shall continue to refer to the map $\beta$ as the Bockstein homomorphism between $\ker \psi$ and $\coker \psi$.
\end{rmk}

\begin{rmk} \normalfont
The Bockstein homomorphism is often constructed in the case where $V$ and $W$ are $\mathbb{Z}$-modules. In this case, first reduce $V$ and $W$ to $\mathbb{Z}/p^2$ modules, and then apply the above construction.
\end{rmk}

In this section we have described how to every $\phi\in L_\psi$, there is a unique Bockstein homomorphism $\beta_\phi:\ker\psi \to \coker \psi$. This fact defines the following map.
\begin{definition}\label{DefBetaPhi}
\normalfont
Define $\Gamma$ to be the map from $L_\psi$ to $\Hom(\ker\psi,\coker\psi)$ that sends $\phi$ in $L_\psi$ to the unique Bockstein homomorphism $\beta_\phi$, which is in $\Hom(\ker\psi,\coker\psi)$, that is given by $\phi$.
\end{definition}

Composing $\Gamma$ with addition by $\phi_0$ gives a well defined set map $B$ with domain $L_0$ and target $\Hom(\ker\psi, \coker\psi)$.
This is shown in the following diagram.
\[
\begin{tikzcd}
L_0 \arrow{r}{+\phi_0} \arrow[bend left, red]{rr}{B} & L_\psi \arrow{r}{\Gamma} & \Hom(\ker\psi, \coker\psi)
\end{tikzcd}
\]

We should like to examine the properties of this map. The map from $L_0$ to $L_\psi$ given by adding $\phi_0$ is a bijection. The next lemma shows that the map $\Gamma$ is onto, which shows that $B$ is also onto.  In particular, every $\mathbb{Z}/p$ linear map from the kernel of $\psi$ to the cokernel of $\psi$ is the Bockstein homomorphism of some $\phi:V\to W$ that induces $\psi$.

\begin{lem}\label{Bonto}\normalfont
The map $\Gamma$ from $L_\psi$ to $\Hom(\ker\psi, \coker\psi)$ is onto.
\end{lem}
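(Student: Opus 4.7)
The plan is to produce, for any $\gamma \in \Hom(\ker\psi, \coker\psi)$, an explicit $\phi \in L_\psi$ whose induced Bockstein equals $\gamma$. The key tool is Proposition \ref{LiftProp}, which allows us to specify a $\mathbb{Z}/p^2$-linear map on $V$ by prescribing arbitrary values on a lift of any basis of $\overline{V}$.

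First, I would fix the basis $\{e_i\} \cup \{f_j\}$ of $\overline{V}$ from Definition \ref{BasisDef}, with $\{e_i\}$ a basis of $\ker\psi$, and work with the lifted basis $\{\tilde{e}_i\} \cup \{\tilde{f}_j\}$ of $V$. Using the isomorphism $pW/\phi(pV) \cong \coker \psi$ of Lemma \ref{CoKerIso}, for each $i$ I would choose a representative $pw_i \in pW$ of the class $\gamma(e_i) \in \coker\psi$.

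Next, I would define $\phi: V \to W$ on the basis by $\phi(\tilde{e}_i) := pw_i$ for every $i$ and $\phi(\tilde{f}_j) := \phi_0(\tilde{f}_j)$, where $\phi_0$ is the element fixed in Remark \ref{Phi0Def}, and extend $\mathbb{Z}/p^2$-linearly. Since $pw_i \in pW$ reduces to $\overline{0} = \psi(e_i)$ and $\phi(\tilde{f}_j)$ reduces to $\psi(f_j)$ by construction of $\phi_0$, the reduction $\overline{\phi}$ agrees with $\psi$ on the basis of $\overline{V}$, so $\phi \in L_\psi$.

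Finally, I would verify that $\beta_\phi = \gamma$ by evaluating both sides on the basis $\{e_i\}$ of $\ker\psi$. Taking $\tilde{e}_i$ as the representative of $e_i$ in the Bockstein construction, we have $\phi(\tilde{e}_i) = pw_i$, so that $\beta_\phi(e_i) = pw_i + \phi(pV)$, which corresponds under Lemma \ref{CoKerIso} precisely to $\gamma(e_i) \in \coker\psi$. Linearity then gives $\beta_\phi = \gamma$. The only real obstacle is being careful that the prescribed values are consistent with $\phi \in L_\psi$, and this is precisely why one places $\phi(\tilde{e}_i)$ in $pW$ rather than in arbitrary $W$; the construction is otherwise mechanical.
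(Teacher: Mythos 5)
Your proposal is correct and is essentially the same argument as the paper's: both fix the basis from Definition \ref{BasisDef}, lift it via Proposition \ref{LiftProp}, send $\tilde{e}_i$ to a $p$-multiple representing $\gamma(e_i)$ (identified via Lemma \ref{CoKerIso}), send $\tilde{f}_j$ to any lift of $\psi(f_j)$, and then check $\beta_\phi = \gamma$ on the basis of $\ker\psi$. The only cosmetic difference is that you pin $\phi(\tilde{f}_j) := \phi_0(\tilde{f}_j)$ where the paper allows an arbitrary choice of lift, which changes nothing.
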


\begin{proof}
Let $\beta\in\Hom(\ker\psi,\coker\psi)$. Let $\{e_i\}\cup\{f_j\}$ and $\{\tilde{e_i}\}\cup\{\tilde{f_j}\}$ be bases of $\overline{V}$ and $V$ as defined in Definition \ref{BasisDef}. We shall define $\phi$ on the basis for $V$ and then extend linearly to define $\phi$ on all of $V$. We must show that the Bockstein homomorphism $\beta_\phi$ of $\phi$ is equal to $\beta$.

For each $i$, we know that $e_i$ is in the domain of $\beta$. So $\beta(e_i) = \overline{w_i} + \psi(\overline{V})$ for some $w_i$ in $W$. Define $\phi(\tilde{e_i}) = pw_i$.  Define $\phi(\tilde{f_j})$ to be any value in $W$ whose reduction modulo $p$ is $\psi(f_j)$.  Then $\overline{\phi(e_i)} = \overline{pw_i} = 0 = \psi(e_i)$ and $\overline{\phi(\tilde{f_j})} = \psi(f_j)$. This shows that $\overline{\phi}=\psi$. In particular $\phi$ is in $L_\psi$.

By construction, $\beta_\phi(e_i) = \overline{w_i}+\psi(\overline{V}) = \beta(e_i)$. Since $\beta_\phi$ is equal to $\beta$ on the basis of $\ker\psi$, they are equal as $\mathbb{Z}/p$-linear functions. 
\end{proof}

\begin{lem}
\normalfont The map $B$ is a $\mathbb{Z}/p$-linear map.  
\end{lem}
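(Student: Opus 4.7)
The plan is to exploit the deliberate choice of $\phi_0$ made in Remark \ref{Phi0Def}: by construction $\phi_0(\tilde{e}_i) = 0$ on the lifted basis $\{\tilde{e}_i\}$ of $\ker\psi$, so when we evaluate $\beta_{\phi_0 + \phi}$ on $\overline{v}\in\ker\psi$ and choose the representative cleverly, $\phi_0$ drops out of the computation. This removes the affine offset that makes $L_\psi$ not a vector space and lets the $\mathbb{Z}/p$-structure on $L_0$ pass directly to $\Hom(\ker\psi,\coker\psi)$.

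Concretely, I would first fix $\overline{v}\in\ker\psi$, expand $\overline{v} = \sum_i c_i e_i$ in the basis from Definition \ref{BasisDef}, and choose the representative $v = \sum_i c_i \tilde{e}_i \in V$. For any $\phi \in L_0$, we then have $(\phi_0 + \phi)(v) = \phi(v) \in pW$ (since $\overline{\phi} = 0$); write $\phi(v) = p w_\phi$, which determines $w_\phi$ modulo $pW$ by Lemma \ref{KerIsIm}, hence $\overline{w_\phi}\in\overline{W}$ is well-defined. Tracing through the construction of $\beta$ and the identification $pW/\phi(pV)\cong \coker\psi$ from Lemma \ref{CoKerIso} yields
\[
B(\phi)(\overline{v}) \;=\; \overline{w_\phi} + \psi(\overline{V}).
\]

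With this formula, additivity and scalar compatibility are immediate. For $\phi,\phi'\in L_0$, $(\phi+\phi')(v) = p(w_\phi + w_{\phi'})$, so $B(\phi+\phi')(\overline{v}) = \overline{w_\phi}+\overline{w_{\phi'}} + \psi(\overline{V}) = B(\phi)(\overline{v}) + B(\phi')(\overline{v})$. For $\alpha\in\mathbb{Z}/p^2$, $(\alpha\phi)(v) = p(\alpha w_\phi)$, so using the scalar action from Lemma \ref{L_0isZp} one gets $B(\overline{\alpha}\cdot\phi)(\overline{v}) = \overline{\alpha w_\phi} + \psi(\overline{V}) = \overline{\alpha}\cdot B(\phi)(\overline{v})$, where the right-hand scalar action is the standard one on the $\mathbb{Z}/p$-vector space $\Hom(\ker\psi,\coker\psi)$. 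The only real obstacle is the conceptual one already addressed in the first paragraph: since $\phi_0 \notin L_0$ whenever $\psi \neq 0$, $\mathbb{Z}/p$-linearity of $B$ is not automatic from $\Gamma$ being just a set map, and it is precisely the tailored choice of $\phi_0$ that makes the computation collapse. Once the representative $v = \sum c_i \tilde{e}_i$ is chosen so that $\phi_0(v)$ vanishes, everything else is a routine verification.
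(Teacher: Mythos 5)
Your proposal is correct and follows essentially the same route as the paper: both hinge on the tailored choice $\phi_0(\tilde{e}_i) = 0$ from Remark \ref{Phi0Def}, which makes $\phi_0$ drop out when $B(\phi)$ is evaluated on the representative $\sum_i c_i \tilde{e}_i$, leaving only $\phi(v) \in pW$ for $\phi \in L_0$. The one small difference is that you package the computation as the explicit formula $B(\phi)(\overline{v}) = \overline{w_\phi} + \psi(\overline{V})$ and also check scalar compatibility directly, whereas the paper only verifies additivity on the basis elements $e_i$ (which suffices, since additive maps between $\mathbb{Z}/p$-vector spaces are automatically $\mathbb{Z}/p$-linear).
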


\begin{proof}
Let $\phi$ and $\phi'$ be in $L_0$. We must show that $\Gamma(\phi+\phi'+\phi_0) = \Gamma(\phi + \phi_0) + \Gamma(\phi'+\phi_0)$, for $\phi_0\in L_\psi$ as described in Remark \ref{Phi0Def}. For a basis $\{e_i\}$ of $\ker\psi$, it suffices to show that
\[
\Gamma(\phi+\phi'+\phi_0)(e_i) = \Gamma(\phi + \phi_0)(e_i) + \Gamma(\phi'+\pi_0)(e_i)
\]

Let $\tilde{e_i}$ be any lift of $e_i$.  Then $\phi_0(\tilde{e_i})=0$ by construction. Also,  
\begin{equation}\label{Gamma1}
\overline{(\phi+\phi')(\tilde{e_i})}= \overline{\phi(\tilde{e_i})} + \overline{\phi'(\tilde{e_i})} =0,
\end{equation}
and
\begin{equation}\label{Gamma2}
\overline{\phi(\tilde{e_i})}=\overline{\phi'(\tilde{e_i})}=0,
\end{equation}
because $\phi$ and $\phi'$ are in $L_0$. Thus there are $w_i$ and $w_i'$ in $W$ with $\phi(\tilde{e_i}) = pw$ and $\phi'(\tilde{e_i}) = pw'$. Thus by (\ref{Gamma2}) we know that 

\begin{eqnarray*}
\Gamma(\phi + \phi_0)(e_i) + \Gamma(\phi'+\phi_0)(e_i) &=&\left( \overline{w_i} + \psi(\overline{V})\right) + \left(\overline{w'_i} + \psi(\overline{V})\right)\\
 &=& \overline{w_i} + \overline{w'_i} + \psi(\overline{V}).
\end{eqnarray*}
Equations (\ref{Gamma1}) and (\ref{Gamma2}) together give that
\[
\Gamma(\phi+\phi'+\phi_0)(e_i) = \overline{w_i} + \overline{w_i'} + \psi(\overline{V}).
\]
So $\Gamma(\phi+\phi'+\phi_0)(e_i) = \Gamma(\phi + \phi_0)(e_i) + \Gamma(\phi'+\phi_0)(e_i)$, as desired.
\end{proof}

\section{Counting}

Let $V$ and $W$ be $\mathbb{Z}/p^2$-modules of dimensions $n$ and $m$ respectively. In Section \ref{L_0Section} we defined $L_\psi$ as the collection of all maps from $V$ to $W$ whose reduction modulo $p$ is $\psi$. We then found that $L_\psi$ has $p^{m\cdot n}$ elements. We should next like to answer the following question: Given a Bockstein homomorphism $\beta$, which is in $\hom(\ker\psi \coker\psi)$, how many $\phi$ in $L_\psi$ have $\beta$ as their Bockstein homomorphisms?  To answer this question, we shall first look at the size of $\Gamma^{-1}(\beta)$.  

\begin{lem}\label{SizeGammaInv}
\normalfont
Let $k:=\dim(\ker\psi)$. Then the space $\Gamma^{-1}(\beta)$ has exactly $p^{(m+k)(n-k)}$ elements.
\end{lem}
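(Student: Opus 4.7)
The plan is to leverage the fact that $B = \Gamma \circ (+\phi_0)$ is a $\mathbb{Z}/p$-linear surjection from $L_0$ onto $\Hom(\ker\psi,\coker\psi)$, so that all of its fibers have the same cardinality, namely $|\ker B|$. Since translation by $\phi_0$ is a bijection between $L_0$ and $L_\psi$ identifying $B^{-1}(\beta)$ with $\Gamma^{-1}(\beta)$, computing $|\ker B|$ will give the desired count.

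First I would combine the previous lemma (Lemma \ref{Bonto}) that $\Gamma$ is onto with the bijection $+\phi_0: L_0 \to L_\psi$ to conclude that $B$ is a surjective $\mathbb{Z}/p$-linear map. Then for any $\beta$ in $\Hom(\ker\psi,\coker\psi)$, linearity plus surjectivity gives that the fiber $B^{-1}(\beta)$ is a coset of $\ker B$ in $L_0$, so
\[
|\Gamma^{-1}(\beta)| \;=\; |B^{-1}(\beta)| \;=\; |\ker B| \;=\; \frac{|L_0|}{|\Hom(\ker\psi,\coker\psi)|}.
\]

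Next I would compute the two sizes on the right. Lemma \ref{L_0isZp} together with Lemma \ref{SizeLpsi} give $|L_0| = p^{mn}$. For the denominator, note that $\dim_{\mathbb{Z}/p}\ker\psi = k$ and, by rank-nullity in $\overline{V} \to \overline{W}$, $\dim_{\mathbb{Z}/p}\coker\psi = m - (n-k) = m - n + k$. Hence $\Hom_{\mathbb{Z}/p}(\ker\psi,\coker\psi)$ is a $\mathbb{Z}/p$-vector space of dimension $k(m-n+k)$ and has $p^{k(m-n+k)}$ elements.

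Combining these inputs yields
\[
|\Gamma^{-1}(\beta)| \;=\; \frac{p^{mn}}{p^{k(m-n+k)}} \;=\; p^{mn - k(m-n+k)} \;=\; p^{(m+k)(n-k)},
\]
where the last equality is the algebraic identity $mn - km + kn - k^2 = (m+k)(n-k)$. The only real content is the surjectivity of $\Gamma$, which is already available, and the dimension count for $\coker\psi$; the rest is a linear-algebra fiber-counting argument, so I do not anticipate a substantive obstacle.
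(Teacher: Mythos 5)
Your proof is correct and follows essentially the same route as the paper: use the bijection $+\phi_0$ to identify $\Gamma^{-1}(\beta)$ with $B^{-1}(\beta)$, use linearity (plus the surjectivity of $\Gamma$ from Lemma \ref{Bonto}) to reduce to counting $\ker B$, compute $\dim(\coker\psi)$ by rank--nullity, and divide $|L_0| = p^{mn}$ by $|\Hom(\ker\psi,\coker\psi)| = p^{k(m-n+k)}$. You are somewhat more explicit than the paper in invoking surjectivity to justify that the fibers are all cosets of $\ker B$, which the paper leaves implicit.
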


\begin{proof}
Since translation by the $\phi_0$ given in Remark \ref{Phi0Def} is a bijection, we know that $B^{-1}(\beta)$ has the same size as $\Gamma^{-1}(\beta)$. Since $B$ is a linear map, we also know that $B^{-1}(0)$ has the same size as $B^{-1}(\beta)$.  Thus $\Gamma^{-1}(\beta)$ has the same size as $B^{-1}(0)$, so we shall find the size of $B^{-1}(0)$.

Since $V$ and $W$ have dimension $n$ and $m$ respectively, we know by Proposition \ref{LiftProp} that $\overline{V}$ and $\overline{W}$ also have dimensions $n$ and $m$ respectively.   Recall that $\coker\psi$ is defined as $\overline{W}/ \psi(\overline{V})$. Let $k:=\ker(\psi)$. Since $\psi$ is a $\mathbb{Z}/p$-linear map, by the Rank-Nullity Theorem, we have that $n = k + \dim(\psi(\overline{V}))$. So $\dim(\psi(\overline{V})) = n-k$.  Thus $\dim(\coker(\psi)) = m-(n-k)$.  From this, we have that the number of elements in $B^{-1}(0)$ is 
\[
p^{mn-k(m-n+k)}=p^{(m+k)(n-k)}.
\]
\end{proof}

We now come to our main result. Recall that a choice of random function from $V$ to $W$ is the same as choosing a random $m$ by $n$ matrix.  

\begin{thm}
\normalfont
Let $\phi$ be an $m$ by $n$ matrix whose entries are chosen i.i.d. from the discrete uniform distribution on $\{0,1,2,\ldots, p^2-1\}$. Let $\psi=\overline{\phi}$.  Let $\beta_\phi$ be the Bockstein homomorphism defined by $\phi$ as in Definition \ref{DefBetaPhi}.  Note that $\beta_\phi$ is a random variable. Let $\beta$ be in $\hom(\ker\psi,\coker\psi)$. Then
\[
\mathbb{P}\big(\beta_\phi=\beta \big| \overline{\phi}=\psi\big) = \frac{1}{p^{k(m-n+k)}}.
\]
\end{thm}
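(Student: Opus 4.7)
The plan is to reduce the conditional probability to a simple counting ratio and then invoke the lemmas from Sections \ref{L_0Section} and the counting section. Since the entries of $\phi$ are i.i.d.\ uniform on $\{0,1,\ldots,p^2-1\}$, the induced distribution on the space of $m\times n$ matrices over $\mathbb{Z}/p^2$ is uniform on all of $\Hom(V,W)$. The event $\{\overline{\phi}=\psi\}$ is precisely the event $\{\phi\in L_\psi\}$, and because reduction mod $p$ is an equidistributed surjection (each fiber has the same cardinality), the conditional distribution of $\phi$ given $\overline{\phi}=\psi$ is uniform on $L_\psi$.

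Next, I would rewrite the conditional probability as a ratio of sizes. Since $\beta_\phi=\Gamma(\phi)$, the event $\{\beta_\phi=\beta\}\cap\{\phi\in L_\psi\}$ is exactly $\Gamma^{-1}(\beta)\subseteq L_\psi$. By the uniformity established in the previous step,
\[
\mathbb{P}\bigl(\beta_\phi=\beta\,\big|\,\overline{\phi}=\psi\bigr)
= \frac{|\Gamma^{-1}(\beta)|}{|L_\psi|}.
\]

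Finally, I would plug in the two cardinalities already computed. Lemma \ref{SizeLpsi} gives $|L_\psi|=p^{mn}$, and Lemma \ref{SizeGammaInv} gives $|\Gamma^{-1}(\beta)|=p^{(m+k)(n-k)}=p^{mn-k(m-n+k)}$. Substituting,
\[
\mathbb{P}\bigl(\beta_\phi=\beta\,\big|\,\overline{\phi}=\psi\bigr)
= \frac{p^{mn-k(m-n+k)}}{p^{mn}}
= \frac{1}{p^{k(m-n+k)}},
\]
which is the claimed formula.

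There is essentially no hard step here once the earlier lemmas are in hand; the substantive content has already been absorbed into Lemma \ref{SizeGammaInv}, which in turn rested on the linearity of $B$ (so that every nonempty fiber $\Gamma^{-1}(\beta)$ has the same size as $B^{-1}(0)$) and on the surjectivity of $\Gamma$ from Lemma \ref{Bonto} (so that $\Gamma^{-1}(\beta)$ is nonempty for every $\beta$). The only point that deserves explicit mention in the write-up is the justification that conditioning on $\overline{\phi}=\psi$ yields the uniform distribution on $L_\psi$; this is the step that converts the probabilistic statement into the combinatorial ratio above.
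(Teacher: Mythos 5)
Your proof is correct and follows essentially the same route as the paper: reduce the conditional probability to the ratio $|\Gamma^{-1}(\beta)|/|L_\psi|$ and invoke Lemmas~\ref{SizeLpsi}, \ref{Bonto}, and~\ref{SizeGammaInv}. Your write-up is actually cleaner than the paper's, which omits the step converting the probability to a counting ratio and which misattributes the cardinality $p^{mn-k(m-n+k)}$ to $\hom(\ker\psi,\coker\psi)$ (that set has $p^{k(m-n+k)}$ elements; $p^{mn-k(m-n+k)}$ is the size of each fiber $\Gamma^{-1}(\beta)$, as you correctly use).
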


\begin{proof}
We know from Remark \ref{SizeLpsi} that $L_\psi$ has $p^{mn}$ elements.  By Lemma \ref{Bonto}, we know that $\Gamma$ is onto, and by Lemma \ref{SizeGammaInv}, we know that the size of $\hom(\ker\psi,\coker\psi)$ is $p^{mn-k(m-n+k)}$.
\end{proof}

\bibliography{master}
\bibliographystyle{plain}

\end{document}